\documentclass[dvips,ms]{MyStyle}

\usepackage{dsfont}
\usepackage{amsmath}
\usepackage{amssymb}
\usepackage{amsthm}
\usepackage{rotating}
\usepackage[bitstream-charter]{mathdesign}

\usepackage{mathtools}
\usepackage{cancel}
\usepackage[version=3]{mhchem}
\usepackage{centernot}
\usepackage{centernotlarge}
\usepackage{xfrac}

\RequirePackage[OT1]{fontenc}
\RequirePackage[colorlinks,citecolor=blue,urlcolor=blue]{hyperref}
\usepackage{url}
\usepackage{booktabs,multirow}
\usepackage{rotating}
\usepackage[noadjust]{cite}
\usepackage{filecontents}

 \theoremstyle{plain}
 \newtheorem{theo}{Theorem}
 \newtheorem{lem}[theo]{Lemma}

 \theoremstyle{remark}
 \newtheorem{defi}[theo]{Definition}

 \newtheorem{rem}[theo]{Remark}

 \newcommand{\RR}{\mathds{R}}
 \newcommand{\NN}{\mathds{N}}
 \newcommand{\ZZ}{\mathds{Z}}
 
\newcommand{\E}{\operatorname{\mathds{E}}}
\newcommand{\Var}{\operatorname{\mathsf{Var}}}
\newcommand{\Cov}{\mathsf{Cov}}
\newcommand{\Sig}{\textrm{$\centernotlarge{\Sigma}$}}
 \newcommand{\ta}{\textrm{\rm t}}
 \newcommand{\m}{{\rm{m}}}
 \newcommand{\nm}{{\rm{nm}}}
 \newcommand{\bx}{\textrm{\bf\em x}}
 \newcommand{\bX}{\textrm{\bf\em X}}

 \newcommand{\varg}{g}

 \newcommand{\ds}{\displaystyle}

 \newcommand{\bm}[1]{\boldsymbol#1}

\let\originalleft\left
\let\originalright\right
\renewcommand{\left}{\mathopen{}\mathclose\bgroup\originalleft}
\renewcommand{\right}{\aftergroup\egroup\originalright}
\begin{document}

\begin{frontmatter}

\title{A note on a variance bound for the multinomial and the negative multinomial distribution}

\runtitle{}

\begin{aug}
\author{\fnms{Giorgos} \snm{Afendras}\thanksref{a,e1}\ead[label=e1,mark]{\footnotesize g\_afendras@math.uoa.gr},~}
\and
\author{\fnms{Vassilis} \snm{Papathanasiou}\thanksref{a,e2}\ead[label=e2,mark]{\footnotesize bpapath@math.uoa.gr}}

\address[a]{Department of Mathematics, Section of Statistics and O.R., University of Athens, Panepistemiopolis, 157 84 Athens, Greece\\
\printead{e1,e2}}

\runauthor{}

\affiliation{University of Athens}

\end{aug}

\begin{abstract}
We prove a Chernoff-type upper variance bound for the multinomial and the negative multinomial distribution. An application is also given.
\smallskip

\noindent
{\it AMS 2000 subject classifications:} Primary 60E15.
\end{abstract}

\begin{keyword}
\kwd{Cauchy-Schwartz inequality}
\kwd{Covariance identity}
\kwd{Variance bounds}
\end{keyword}

\end{frontmatter}

 \section{Introduction}
 \label{section introduction}
 Let $Z$ be a standard normal distribution and $\varg$ be an absolutely continuous function, with a.s.~ derivative $\varg'$. Chernoff \cite{Cher} proved that
 $\Var \varg(Z)\le\E\big(\varg'(Z)\big)^2$,
 provided that $\E\big(\varg'(Z)\big)^2$ is finite, where the equality holds iff $\varg$ is a linear polynomial; see also the previous papers by Nash \cite{Nash}, Brascamp and Lieb \cite{BL}.
 This inequality has been generalized and extended by many authors (see, e.g.,
 \cite{Chen,Cac,CP1,Cac2,CP2,Klaa,John,Pap,AB,HK,Hou,HP,APP1,APP2,CR,P-Rao,AP,AP2,Afe,WZ}).

 In discrete case, let $X$ be an integer-valued random variable with probability mass function (pmf) $p$ and finite mean $\mu$ and variance $\sigma^2$, and consider the function $w$ given by
 \[
 \sum_{j\le{x}}(\mu-j)p(j)=\sigma^2w(x)p(x)\quad\textrm{for all} \ \ x\in\ZZ.
 \]
 Then, for any suitable function $\varg$ the following inequality and Stein-type covariance identity hold (see Cacoullos and Papathanasiou \cite[Lemma 2.2]{CP1} and \cite[eq. (3.2)]{CP2})
 \begin{equation}\label{Descrete Chernoff}
 \Var\varg(X)\le\sigma^2\E w(X)[\Delta\varg(X)]^2,
 \end{equation}
 \begin{equation}\label{Descrete covariance identity}
 \Cov[X,\varg(X)]=\sigma^2\E w(X)\Delta\varg(X),
 \end{equation}
 where $\Delta$ is the forward difference operator (for the cases where $w$ is a quadratic polynomial see also Afendras et al.\ \cite{APP1,APP2}).

 Let now $\bm{X}=(X_1,\ldots,X_k)^\ta$ be a random vector with pmf supported by a ``convex'' set $C^k\subseteq\NN^k$ such that $\mathbf{0}\in{C^k}$ (``convex'' in the sense that if $\bm{x}=(x_1,\ldots,x_k)\in{C^k}$ then $\mbox{\Large\raisebox{-.25ex}{$\times$}}_{i=1}^{k}\{0,\ldots,x_i\}\subseteq{C^k}$). Assume that the mean $\bm{\mu}$ and the variance-covariance matrix $\Sig$ of $\bm{X}$ are well defined ($\Sig>0$) and consider the vector of linear functions
 \begin{equation}\label{vector q}
 \bm{q}(\bm{x})\equiv\big(q^1(\bm{x}),\ldots,q^k(\bm{x})\big)^\ta\coloneqq\Sig^{-1}\bm{x}.
 \end{equation}
 Then the $\bm{w}$-function of $\bm{X}$ is well defined for every $\bm{x}\in C^k$ by $\bm{w}(\bm{x})\equiv\big(w^1(\bm{x}),\ldots,w^k(\bm{x})\big)^\ta$ with
 \begin{equation}\label{w^i}
 w^i(\bm{x})p(\bm{x})=\sum_{j=0}^{x_i}\left[\mu^i-q^i(\bm{u}_i,j,\bm{v}_i)\right]p(\bm{u}_i,j,\bm{v}_i),
 \end{equation}
 where $\bm{u}_i=(x_1,\ldots,x_{i-1})$, $\bm{v}_i=(x_{i+1},\ldots,x_k)$ and $\mu^i=\E q^i(\bm{X})$, $i=1,\ldots,k$ (see \cite{CP3,PP}).
 Cacoullos and Papathanasiou \cite{CP3} extended the identity \eqref{Descrete covariance identity} as
 \begin{equation}\label{multivariate cov. ident.}
 \Cov[q^i(\bm{X}),\varg(\bm{X})]=\E w^i(\bm{X})\varg_i(\bm{X}),
 \end{equation}
 provided that $\E|w^i(\bm{X})\varg_i(\bm{X})|$ and $\E|(q^i(\bm{X})-\mu^i)\varg(\bm{X})|$ are finite, $i=1,2,\ldots,k$
 [for $\varg_i$ see Definition \ref{defi forward differences operators ect}(a) below];
 also, under the same conditions, they established the following inequality
 \begin{equation}
 \label{mult. lower variance bound C-P general}
 \Var\varg(\bm{X})\ge\E\big(w_1(\bm{X})\varg_1(\bm{X}),\ldots,w_k(\bm{X})\varg_k(\bm{X})\big)
                     \Sig
                     \E\big(w_1(\bm{X})\varg_1(\bm{X}),\ldots,w_k(\bm{X})\varg_k(\bm{X})\big)^\ta.
 \end{equation}
 If $\bm{X}$ is multinomial or negative multinomial distribution then the weight functions $w^i$ are the same, say $w$, and \eqref{mult. lower variance bound C-P general} takes the form
 \begin{equation}
 \label{mult. lower variance bound C-P}
 \Var\varg(\bm{X})\ge\E\big(w(\bm{X})\nabla^\ta\varg(\bm{X})\big) \Sig \E\big(w(\bm{X})\nabla\varg(\bm{X})\big),
 \end{equation}
 where $\nabla\varg$ is the discrete gradiant of $\varg$, see Definition \ref{defi forward differences operators ect}(b) below. This note complements this lower bound with the following upper bound:
 \[
 \Var\varg(\bm{X})\le \E\big(w(\bm{X}) \nabla^\ta\varg(\bm{X}) \Sig \nabla\varg(\bm{X})\big).
 \]
 Notice that for the continuous case of dependent random variables the similar bound has proven only in multivariate normal distribution by Chen \cite[eq.~(3.1)]{Chen}; also, Cacoullos \cite[eq.'s (1.1), (1.4)]{Cac2} generalize Chen's inequality for a vector of independent random variables, for both continuous and discrete cases (of course, Cacoullos's results cannot be apply for the multinomial and negative multinomial distributions, since both are vectors of dependent random variables).

  \section{Preliminaries}
 \label{section preliminaries}
 The following notations will be used in the sequel.
 \begin{defi}\label{defi forward differences operators ect}
 Let $y\in(-1,\infty)$, $\bm{x}=(x_1,\ldots,x_k)^\ta\in\RR^k$, $\bm{\pi}=(\pi_1,\ldots,\pi_k)^\ta\in(0,1)^k$ and $\varg:\RR^k\to\RR$. We denote by:
 \begin{itemize}
 \item[\rm(a)] $\varg_i(\bm{x})\equiv\Delta_i\varg(\bm{x})\coloneqq\varg(\bm{x}+\bm{e}_i)-\varg(\bm{x})$, where $\bm{e}_i$ is the $i$-th vector of the standard orthonormal basis of $\RR^k$.
 \item[\rm(b)] $\nabla^\ta\varg(\bm{x})\equiv\left(\nabla\varg(\bm{x})\right)^\ta\coloneqq(\varg_1(\bm{x}),\varg_2(\bm{x}),\ldots,\varg_k(\bm{x}))$.
 \item[\rm(c)] $\bm{\pi}^\bx\coloneqq\pi_1^{x_1}\cdots\pi_k^{x_k}$.
 \item[\rm(d)] ${y\choose \bx}\coloneqq\frac{\varGamma(y+1)}{x_1!\cdots x_k!\varGamma(y+1-x_1-\cdots-x_k)}$, provided that $\bm{x}\in\NN^k$ with $\sum_{i=1}^kx_i< y+1$.
 \item[\rm(e)] $\bm{x}_{-k}\coloneqq(x_1,\ldots,x_{k-1})^\ta\in\RR^{k-1}$.
 \end{itemize}
 \end{defi}
 \begin{defi}\label{defi b nb m nm}
 We shall use the following notations:
 \begin{itemize}
 \item[\rm(a)] $\m_k(n,\bm{\pi})$ the $k$-dimensional multinomial distribution with parameters $\bm\pi\in(0,1)^k$ and $n\in\NN$, namely with pmf $p(\bm{x})={n\choose{\bx}}\bm{\pi}^\bx\pi_0^{x_0}$, $\bm{x}\in\NN^k$ with $\sum_{i=1}^kx_i\le{n}$, where $x_0\coloneqq n-\sum_{i=1}^kx_i$ and $\pi_0\coloneqq1-\sum_{i=1}^k\pi_i>0$.
 \item[\rm(b)] $\nm_k(r,\bm{\theta})$ the $k$-dimensional negative multinomial distribution with parameters $\bm\theta\in(0,1)^k$ and $r>0$, namely with pmf $p(\bm{x})={r+\sum_{i=1}^kx_i-1\choose{\bx}}\bm{\theta}^\bx\theta_0^{r}$, $\bm{x}\in\NN^k$, where $\theta_0\coloneqq1-\sum_{i=1}^k\theta_i>0$.
 \item[\rm(c)] $p_k(\bm{x})\equiv p_{X_k}(x_k)$, $p_{-k}(\bm{x})\equiv p_{\bX_{-k}}(\bm{x}_{-k})$ and $p_{-k|k}(\bm{x})\equiv p_{\bX_{-k}|X_k=x_k}(\bm{x}_{-k})$ the pmf's of $X_k$, $\bm{X}_{-k}$ and $\bm{X}_{-k}|X_k=x_k$.
 \end{itemize}
 \end{defi}

 Now we present the $w$-functions of $\bm{X}$, $X_k$ and $\bm{X}_{-k}|X_k=x_k$, [$w(\bm{x})$, $w_k(\bm{x})$ and $w_{-k|k}(\bm{x})$, say], in both cases which we study.
 \begin{rem}
 \label{rem same w^i}
 For the multinomial and negative multinomial distributions the functions $w^i$ of \eqref{w^i} are the same for all $i$. Specifically, $w^i(\bm{x})=\pi_0^{-1}\big(n-\sum_{j=1}^{k}x_j\big)$ in $\m_k(n,\bm{\pi})$ and $w^i(\bm{x})=r^{-1}\theta_0\big(r+\sum_{j=1}^{k}x_j\big)$ in $\nm_k(r,\bm{\theta})$, see \cite[pp.\ 178--179]{CP3}. Note that we have corrected a minor misprint in the constant of the $w^i$ function corresponding to the negative multinomial.
 \end{rem}
 If $\bm{X}\sim\m_k(n,\bm{\pi})$ then $X_k\sim\m_1(n,\pi_k)$ and $\bm{X}_{-k}|X_k=x_k\sim\m_{k-1}\big(n-x_k,\frac{1}{1-\pi_k}\bm{\pi}_{-k}\big)$; so, we define
  \begin{equation}\label{w, w_k, w_-k|k multinomial}
 w(\bm{x})\coloneqq\frac{n-\sum_{i=1}^kx_i}{n\pi_0},
 \ \
 w_k(\bm{x})\coloneqq\frac{n-x_k}{n(1-\pi_k)}
 \ \ \textrm{and}\ \
 w_{-k|k}(\bm{x})\coloneqq\frac{(1-\pi_k)\left(n-\sum_{i=1}^kx_i\right)}{(n-x_k)\pi_0};
 \end{equation}
 noting that each function $h$ of $\bm{X}_{-k}|X_k=n$ is the zero constant of $\RR^{k-1}$ with probability $1$ [$\Var h=0$], so if $x_k=n$ then $w_{-k|k}$ is treated as zero-function. If $\bm{X}\sim\nm_k(r,\bm\theta)$ then $X_k\sim\nm_1\big(r,\frac{\theta_k}{\theta_0+\theta_k}\big)$ and $\bm{X}_{-k}|X_k=x_k\sim\nm_{k-1}(r+x_k,\bm{\theta}_{-k})$; so, the $w$-functions are defined by
 \begin{equation}\label{w w_k w_-k|k negative multinomial}
 \!\!w(\bm{x})\coloneqq\frac{\theta_0\left(r+\sum_{i=1}^kx_i\right)}{r},
 \
 w_k(\bm{x})\coloneqq\frac{\theta_0(r+x_k)}{r(\theta_0+\theta_k)}\\
 \ \textrm{and}\ 
 w_{-k|k}(\bm{x})\coloneqq\frac{(\theta_0+\theta_k)\left(r+\sum_{i=1}^kx_i\right)}{r+x_k}.
 \end{equation}

 For both cases one can easily see that
 \begin{equation}\label{useful}
 p_{-k|k}(\bm{x}+\bm{e}_k)=p_{-k|k}(\bm{x})w_{-k|k}(\bm{x})\\
 \quad\textrm{and} \quad
 w_{k}(\bm{x})w_{-k|k}(\bm{x})=w(\bm{x}).
 \end{equation}

 \begin{lem}\label{lem useful}
 Let $\bm{X}\sim\m_k(n,\bm\pi)$ or $\nm_k(r,\bm\theta)$ and consider a function $\varg$ such that $\E|X_j\varg(\bm{X})|$ and $\E|X_j\varg_i(\bm{X})|$ are finite for all $i,j=1,\ldots,k$. Then,

 \noindent
 {\rm(a)} the following covariance identity holds
 \begin{equation}\label{Cov(sum X_i, g)}
 \Cov\left[\sum_{i=1}^{k}X_i,\varg(\bm{X})\right]=\E\left(w(\bm{X})\sum_{i=1}^{k}c_{i}\varg_i(\bm{X})\right),
 \end{equation}
 where $w$ is given by {\rm\eqref{w, w_k, w_-k|k multinomial}} or {\rm\eqref{w w_k w_-k|k negative multinomial}}, respectively, and $c_{i}=\sum_{j=1}^{k}\sigma_{ij}$ with $\sigma_{ij}=\Cov(X_i,X_j)$;

 \noindent
 {\rm(b)} the next identity is valid (for the multinomial case only when $X_k<n$)
 \begin{equation}\label{Delta_1 E[g|X_k]}
 \Delta_k \E[\varg(\bm{X})|X_k]
 =\E\Bigg[w_{-k|k}(\bm{X})\Bigg(\varg_k(\bm{X})+\alpha_k\sum_{i=1}^{k-1}c_{i|k}\varg_i(\bm{X})\Bigg)\Bigg|X_k\Bigg],
 \end{equation}
 where $c_{i|k}=\sum_{j=1}^{k-1}\sigma_{ij|k}$ with $\sigma_{ij|k}=\Cov(X_i,X_j|X_k)$ and $a_k\equiv a(X_k)$ is $-\frac{1-\pi_k}{\pi_0(n-X_k)}$ or $\frac{\theta_0+\theta_k}{r+X_k}$.
 \end{lem}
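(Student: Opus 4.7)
My plan for part (a) is to start from the p.m.f.\ ratio identity
\[
(x_j+1)\,p(\bm x+\bm e_j)=\E[X_j]\,w(\bm x)\,p(\bm x),\qquad j=1,\ldots,k,
\]
which holds for both models and can be checked directly from the definitions (\ref{w multinomial}) and (\ref{w w_k w_-k|k negative multinomial}). An index shift $\bm x\mapsto\bm x-\bm e_j$ inside $\E[X_j\,g(\bm X)]$ then gives the auxiliary identity
$\E[X_j\,g(\bm X)]=\E[X_j]\,\E[w(\bm X)\,g(\bm X+\bm e_j)]$.
Writing $g(\bm X+\bm e_j)=g(\bm X)+g_j(\bm X)$ and subtracting $\E[X_j]\E[g(\bm X)]$ yields
\[
\Cov[X_j,g(\bm X)]=\E[X_j]\,\E[w(\bm X)g_j(\bm X)]+\E[X_j]\,\Cov[w(\bm X),g(\bm X)].
\]
Summing over $j$ and exploiting that $w(\bm X)$ is affine in $\sum_i X_i$ for both distributions, the extra term $\Cov[w(\bm X),g(\bm X)]$ becomes a scalar multiple of $\Cov[\sum_i X_i,g(\bm X)]$, so I can solve for $\Cov[\sum_i X_i,g(\bm X)]$ and verify that the coefficients collapse to $c_i=n\pi_0\pi_i$ and $c_i=r\theta_i/\theta_0^2$ respectively, which matches $c_i=\sum_j\sigma_{ij}$ as required.

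For part (b), the strategy is to reduce to part (a) applied inside the conditional law $\bm X_{-k}\mid X_k$, which is itself multinomial or negative multinomial in dimension $k-1$ with Stein weight $w_{-k|k}$. Setting $h(x_k):=\E[g(\bm X)\mid X_k=x_k]$ and using $p_{-k|k}(\bm x+\bm e_k)=p_{-k|k}(\bm x)\,w_{-k|k}(\bm x)$ from (\ref{useful}), I can rewrite $h(x_k+1)$ as $\E[g(\bm X+\bm e_k)\,w_{-k|k}(\bm X)\mid X_k=x_k]$. Decomposing $g(\bm X+\bm e_k)=g(\bm X)+g_k(\bm X)$ and subtracting $h(x_k)$ produces
\[
\Delta_k h(x_k)=\E\big[w_{-k|k}(\bm X)g_k(\bm X)\,\big|\,X_k\big]+\E\big[g(\bm X)(w_{-k|k}(\bm X)-1)\,\big|\,X_k\big].
\]
The crucial algebraic step is the direct identification $w_{-k|k}(\bm X)-1=a_k\bigl(\sum_{i<k}X_i-\E[\sum_{i<k}X_i\mid X_k]\bigr)$ from the explicit forms of $w_{-k|k}$, $a_k$ and the conditional means; this turns the second term into $a_k\Cov[g(\bm X),\sum_{i<k}X_i\mid X_k]$. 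Applying part (a) inside the conditional law to this conditional covariance delivers $\E[w_{-k|k}(\bm X)\sum_{i<k}c_{i|k}\,g_i(\bm X)\mid X_k]$, and combining the two terms gives (\ref{Delta_1 E[g|X_k]}).

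The main obstacle is the identification $w_{-k|k}(\bm X)-1=a_k\bigl(\sum_{i<k}X_i-\E[\sum_{i<k}X_i\mid X_k]\bigr)$ with the precise coefficient $a_k$ stated in the lemma: the multinomial and negative multinomial formulas for $w_{-k|k}$ differ in sign (one involves $n-\sum x_i$, the other $r+\sum x_i$), and the conditional means $\E[\sum_{i<k}X_i\mid X_k]$ depend on the rescaled parameters $\varpi_i$ or $\theta_i/(\theta_0+\theta_k)$, so uniform bookkeeping is fiddly. Once this identity and its part-(a) analogue are secured, the rest of the argument is routine index-shifting via (\ref{useful}).
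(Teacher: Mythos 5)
Your proposal is correct, and it splits naturally into two cases. For part (b) you follow essentially the paper's own path: shift the conditional p.m.f.\ via (\ref{useful}) to get $\E[g(\bm{X})|X_k+1]=\E[w_{-k|k}(\bm{X})g(\bm{X}+\bm{e}_k)|X_k]$, split off $\E[w_{-k|k}(\bm{X})g_k(\bm{X})|X_k]$, use $\E[w_{-k|k}(\bm{X})|X_k]=1$ together with the affineness of $w_{-k|k}$ in $X_1,\dots,X_{k-1}$ to turn the remainder into $\alpha_k\Cov[\sum_{i<k}X_i,g(\bm{X})|X_k]$, and then apply part (a) to the conditional law; your ``identification'' $w_{-k|k}-1=\alpha_k(\sum_{i<k}X_i-\E[\sum_{i<k}X_i|X_k])$ is exactly the paper's observation $w_{-k|k}=\alpha_k\sum_{i<k}X_i+\beta_k$ combined with unit conditional mean, and your stated $\alpha_k$ values match. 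For part (a), however, you take a genuinely different and more self-contained route: the paper invokes the Cacoullos--Papathanasiou identity (\ref{multivariate cov. ident.}) for $q^i(\bm{x})=(\Sig^{-1}\bm{x})_i$, uses the cited fact that $w^i=w$ for all $i$ for these two families, and writes $\sum_iX_i=\sum_ic_iq^i(\bm{X})$; you instead derive $\Cov[X_j,g(\bm{X})]=\E[X_j]\E[w(\bm{X})g_j(\bm{X})]+\E[X_j]\Cov[w(\bm{X}),g(\bm{X})]$ directly from the p.m.f.\ recurrence $(x_j+1)p(\bm{x}+\bm{e}_j)=\E[X_j]w(\bm{x})p(\bm{x})$ (which does hold for both families, as a short computation with the multinomial coefficients confirms), sum over $j$, and solve the resulting linear equation for $\Cov[\sum_iX_i,g(\bm{X})]$, the solvability constant being $\pi_0$ (resp.\ $\theta_0$), which is strictly positive; the coefficients $n\pi_0\pi_i$ and $r\theta_i/\theta_0^2$ that emerge are precisely the row sums $c_i=\sum_j\sigma_{ij}$. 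Your version buys independence from the appendix and from the external computation of the $\bm{w}$-vector, at the modest price of the bootstrap step (and of checking the integrability needed to split $\E[w(\bm{X})g(\bm{X}+\bm{e}_j)]$, which the stated moment hypotheses cover since $w$ is affine in $\sum_iX_i$); the paper's version is shorter but leans on \cite{CP3}.
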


 \begin{proof}
 (a) In view of Remark \ref{rem same w^i}, from \eqref{multivariate cov. ident.} we have that $\Cov[q^i(\bm{X}),\varg(\bm{X})]=\E w(\bm{X})\varg_i(\bm{X})$, $i=1,\ldots,k$. By \eqref{vector q} we get $\bm{X}=\Sig\bm{q}(\bm{X})$; so $\sum_{i=1}^kX_i=\sum_{i=1}^kc_iq^i(\bm{X})$. Combining the above relations \eqref{Cov(sum X_i, g)} follows.

 \noindent
 (b) Writing $\Delta_k\E[\varg(\bm{X})|X_k]=\E[\varg(\bm{X})|X_k+1]-\E[\varg(\bm{X})|X_k]$ and using \eqref{useful}, it follows that
 \[
 \begin{split}
 \Delta_k\E[\varg(\bm{X})|X_k]
 &=\E\left[w_{-k|k}(\bm{X})\varg_k(\bm{X})|X_k\right]
   +\E\left[w_{-k|k}(\bm{X})\varg(\bm{X})|X_k\right]-\E[\varg(\bm{X})|X_k]\\
 &=\E\left[w_{-k|k}(\bm{X})\varg_k(\bm{X})|X_k\right]+\Cov\left[w_{-k|k}(\bm{X}),\varg(\bm{X})|X_k\right],
 \end{split}
 \]
 since $\E\left[w_{-k|k}(\bm{X})|X_k\right]=1$ (see \cite[p.~178]{CP3}). In view of \eqref{w, w_k, w_-k|k multinomial} and \eqref{w w_k w_-k|k negative multinomial}, $w_{-k|k}(\bm{X})=\alpha_k\sum_{i=1}^{k-1}X_i+\beta_k$, where $\beta_k\equiv\beta(X_k)$ is a constant in $X_1,\ldots,X_{k-1}$; thus,
 \[
 \Delta_k\E[\varg(\bm{X})|X_k]=\E\left[w_{-k|k}(\bm{X})\varg_k(\bm{X})|X_k\right]+\alpha_k\Cov\Bigg[\sum_{i=1}^{k-1}X_i,\varg(\bm{X})|X_k\Bigg].
 \]
 Finally, from the conditions on $\varg$ it follows that $\E\big|X_j\varg(\bm{X})|X_k\big|$ and $\E\big|X_j\varg_i(\bm{X})|X_k\big|$ are finite for all $i,j=1,\ldots,k-1$. Thus, applying \eqref{Cov(sum X_i, g)} for $\bm{X}_{-k|k}$ the lemma is proved.
 \end{proof}

\section{The main result}
\label{section the main result}
 In this section we present the main result. An application in trinomial distribution is given.
 \begin{theo}\label{theo the main result}
 Let $\bm{X}\sim\m_k(n,\bm\pi)$ [or $\nm_k(r,\bm\theta)$] and consider a function $\varg$ such that $\Var\varg(\bm{X})<\infty$. Then,
 \begin{equation}\label{main result}
 \Var\varg(\bm{X})\le\E\left[w(\bm{X})\nabla^\ta\varg(\bm{X})\Sig\nabla\varg(\bm{X})\right],
 \end{equation}
 where $\Sig$ is the variance-covariance matrix of $\bm{X}$ and $w$ is given by {\rm\eqref{w, w_k, w_-k|k multinomial}} [or {\rm\eqref{w w_k w_-k|k negative multinomial}}]. The equality in {\rm\eqref{main result}} holds iff $\varg$ is a linear function with respect to $x_1,\ldots,x_k$, i.e.\ of the form $\varg(\bm{x})=\rho_0+\sum_{i=1}^{k}\rho_i x_i$.
 \end{theo}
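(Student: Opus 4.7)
The plan is induction on the dimension $k$. The base case $k=1$ is immediate from the one-dimensional inequality (\ref{Descrete Chernoff}): in this case $\bm{X}$ reduces to a binomial or negative binomial variable, for which the weight given by (\ref{w multinomial}) or (\ref{w w_k w_-k|k negative multinomial}) is linear in $x$ (hence a polynomial of degree $\le 2$), and $\sigma^2 w(x)$ from (\ref{Descrete Chernoff}) coincides with the scalar $w(x)\Sig$ appearing in (\ref{main result}).

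For the inductive step I condition on the last coordinate,
\[
\Var g(\bm{X}) = \E[\Var(g(\bm{X})|X_k)] + \Var(\E[g(\bm{X})|X_k]).
\]
Since $\bm{X}_{-k}|X_k$ is again $(k-1)$-dimensional multinomial or negative multinomial, the inductive hypothesis controls the first summand by $\E\bigl[w_{-k|k}\sum_{i,j<k}\sigma_{ij|k}g_ig_j\bigr]$. For the second summand, $X_k$ is a scalar binomial or negative binomial, so (\ref{Descrete Chernoff}) gives
\[
\Var(\E[g|X_k]) \le \sigma_{kk}\,\E\bigl[w_k(\Delta_k\E[g|X_k])^2\bigr].
\]
I then rewrite $\Delta_k\E[g|X_k]$ via Lemma \ref{lem useful}(b) and apply the conditional Cauchy--Schwarz inequality $(\E[w_{-k|k}Y|X_k])^2 \le \E[w_{-k|k}Y^2|X_k]$, valid because $\E[w_{-k|k}|X_k]=1$ (which follows from (\ref{useful}) by summing over $\bm{x}_{-k}$), with $Y=g_k+\alpha_k\sum_{i<k}c_{i|k}g_i$. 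Using $w_k w_{-k|k}=w$ from (\ref{useful}) this yields
\[
\Var(\E[g|X_k]) \le \sigma_{kk}\,\E\Bls w\Bl g_k+\sum_{i<k}\beta_{ik}g_i\Br^2\Brs,\qquad \beta_{ik}:=\alpha_k c_{i|k},
\]
where a short computation shows that the coefficients $\beta_{ik}$ are in fact constants (not random) in both models.

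The crux is then a pair of algebraic identities that reassemble the two bounds into $\E[w\,\nabla^\ta g\,\Sig\,\nabla g]=\sum_{i,j=1}^k\sigma_{ij}\E[wg_ig_j]$: namely, (i)~$\sigma_{kk}\beta_{ik}=\sigma_{ik}$, so the mixed $g_ig_k$ terms on the two sides agree; and (ii)~$w_{-k|k}(\bm{X})\,\sigma_{ij|k}=(\sigma_{ij}-\sigma_{kk}\beta_{ik}\beta_{jk})\,w(\bm{X})$ for $i,j<k$, so the $g_ig_j$ terms agree. Identity (ii) follows in the multinomial case from $w_{-k|k}(n-X_k)=n(1-\pi_k)w$ combined with the explicit form of $\sigma_{ij|k}$, and in the negative multinomial case from $w_{-k|k}(r+X_k)=(\theta_0+\theta_k)rw/\theta_0$; in both settings it can be rephrased through the law of total covariance $\sigma_{ij}=\E\sigma_{ij|k}+\Cov(\E[X_i|X_k],\E[X_j|X_k])$. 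Together with the $g_k^2$ term (which matches directly, with coefficient $\sigma_{kk}$ on both sides), this closes the induction.

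For the equality characterisation, the converse is immediate: when $g(\bm{x})=\rho_0+\sum_i\rho_i x_i$, both sides of (\ref{main result}) equal $\bm{\rho}^\ta\Sig\bm{\rho}$ using $\E w(\bm{X})=1$ (a direct computation in each model). For the forward direction, equality propagates through the chain: equality in the Chernoff bound applied to $\E[g|X_k]$ forces this conditional mean to be affine in $X_k$, and equality in the Cauchy--Schwarz step forces $g_k+\sum_{i<k}\beta_{ik}g_i$ to depend only on $X_k$; iterating on $k$ down to the base case forces $g$ to be affine on the support of $\bm{X}$. I expect the main obstacle to lie in arranging the case-by-case verification of the identities (i)-(ii) in parallel for the two models, together with a clean treatment of the degenerate boundary cases (such as $n=0$ or $X_k=n$ in the multinomial setting, where $w$ and $w_{-k|k}$ are declared to vanish so that the conditional Cauchy--Schwarz step makes sense).
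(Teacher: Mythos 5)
Your proposal is correct and follows essentially the same route as the paper: induction on $k$ via the conditional variance decomposition, the scalar Chernoff bound on $X_k$ combined with Lemma \ref{lem useful}(b) and the conditional Cauchy--Schwarz step (using $\E[w_{-k|k}|X_k]=1$ and $w_kw_{-k|k}=w$), the inductive hypothesis on $\bm{X}_{-k}|X_k$, and the same equality chain for the characterisation. Your identities (i) and (ii), with the observation that $\beta_{ik}=\alpha_kc_{i|k}$ is constant, are exactly the content of the paper's Appendix \ref{appendix algebra}.
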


 \begin{proof}
 If $\E\left[w(\bm{X})\nabla^\ta\varg(\bm{X})\Sig\nabla\varg(\bm{X})\right]=\infty$ then we have nothing to prove. Suppose that
 \begin{equation}
 \label{Suppose in proof of main}
 \E\big[w(\bm{X})\nabla^\ta\varg(\bm{X})\Sig\nabla\varg(\bm{X})\big]<\infty.
 \end{equation}
 The proof will be done by induction on $k$. For $k=1$  \eqref{main result} holds, see \eqref{Descrete Chernoff}. Assuming that \eqref{main result} is valid for $k-1$ for some $k>1$, we will prove that \eqref{main result} is also valid for $k$. It is well known that
 \begin{equation}\label{variance formula}
 \Var\varg(\bm{X})=\E[\Var(\varg(\bm{X})|X_k)]+\Var\left[\E(\varg(\bm{X})|X_k)\right].
 \end{equation}
 Using \eqref{Descrete Chernoff} for $X_k$ it follows that
 \begin{equation}\label{ineq Var E g|X_k}
 \Var\left[\E(\varg(\bm{X})|X_k)\right]\le\sigma_{k}^2\E w_k(\bm{X})\big(\Delta_k\E(\varg(\bm{X})|X_k)\big)^2,
 \end{equation}
 where $\sigma_{k}^2=\Var X_k$. From \eqref{Suppose in proof of main} we have that the conditions of Lemma \ref{lem useful} are valid; noting that ${w}_k(\bm{X})|_{X_k=n}=0$ with probability 1 and with help of \eqref{Delta_1 E[g|X_k]} we get
 \[
 \Var\left[\E(\varg(\bm{X})|X_k)\right]
 \le\sigma_{k}^2\E w_k(\bm{X})\bigg(\E w_{-k|k}(\bm{X})\bigg(\varg_k(\bm{X})
 +\alpha_k\sum_{i=1}^{k-1}c_{i|k}\varg_i(\bm{X})\bigg)\Big|X_k\bigg)^2.
 \]
 Since $\E [w_{-k|k}(\bm{X})|X_k]=1$, an application of Cauchy--Schwartz inequality gives 
 \begin{equation}\label{ineq C-S}
 \!\!
 \scalebox{.93}{
 $\ds\E^2\bigg[w_{-k|k}(\bm{X})\bigg(\varg_k(\bm{X})+\alpha_k\sum_{i=1}^{k-1}c_{i|k}\varg_i(\bm{X})\bigg)\Big|X_k\bigg] \le\E\bigg[w_{-k|k}(\bm{X})\bigg(\varg_k(\bm{X})+\alpha_k\sum_{i=1}^{k-1}c_{i|k}\varg_i(\bm{X})\bigg)^2\Big|X_k\bigg].$
 }
 \end{equation}
 Using \eqref{useful},
 \begin{align}
  \nonumber
 \scalebox{.93}{$\Var[\E(\varg(\bm{X})|X_k)]$}
  &\scalebox{.93}{$\ds\le\E\E\bigg[\sigma_k^2w(\bm{X})\bigg(\varg_k^2(\bm{X})+2\alpha_k\sum_{i=1}^{k-1}c_{i|k}\varg_i(\bm{X})\varg_k(\bm{X})
      +\bigg(\alpha_k\sum_{i=1}^{k-1}c_{i|k}\varg_i(\bm{X})\big)^2\bigg)\Big|X_k\bigg]$}\\
  &=\E\sigma_k^2w(\bm{X})\bigg(\varg_k^2(\bm{X})+2\alpha_k\sum_{i=1}^{k-1}c_{i|k}\varg_i(\bm{X})\varg_k(\bm{X})
    +\alpha_k^2\bigg(\sum_{i=1}^{k-1}c_{i|k}\varg_i(\bm{X})\bigg)^2\bigg).
  \label{E(Var)}
\end{align}
 By the induction hypothesis of \eqref{main result}, with $k-1$ in place of $k$, it follows that
 \begin{equation}\label{ineq Var g|X_k}
 \Var(\varg(\bm{X})|X_k)\le\E\big[w_{-k|k}(\bm{X})\nabla_{-k}^\ta\varg(\bm{X})\Sig_{-k|k}\nabla_{-k}\varg(\bm{X})|X_k\big],
 \end{equation}
 where $\Sig_{-k|k}$ is the variance-covariance matrix of $\bm{X}_{-k|k}$ and $\nabla_{-k}\varg=(\varg_1,\ldots,\varg_{k-1})^\ta$. Thus,
 \begin{align}
  \nonumber
    \E[\Var(\varg(\bm{X})|X_k)]
 &\le\E\E\big[w_{-k|k}(\bm{X})\nabla_{-k}^\ta\varg(\bm{X})\Sig_{-k|k}\nabla_{-k}\varg(\bm{X})|X_k\big]\\
  \nonumber
 &=\E w_{-k|k}(\bm{X})\nabla_{-k}^\ta\varg(\bm{X})\Sig_{-k|k}\nabla_{-k}\varg(\bm{X})\\
 \label{Var(E)}
 &=\E{w_{-k|k}}(\bm{X})\Bigg(\!\sum_{i=1}^{k-1}\!\sigma_{i|k}^2\varg_i^2(\bm{X})
   +2\!\sum_{1\le{i}<j\le{k-1}}\!\sigma_{ij|k}\varg_i(\bm{X})\varg_j(\bm{X})\Bigg).
 \end{align}
 From \eqref{variance formula}, via \eqref{E(Var)} and \eqref{Var(E)}, we get
 \[
 \begin{split}
 \Var\varg(\bm{X})\le\E\Bigg[&w(\bm{X})\sigma_k^2\varg_k^2(\bm{X})
  +\sum_{i=1}^{k-1}\Big[w(\bm{X})\sigma_k^2\alpha_k^2c^2_{i|k}+w_{-k|k}(\bm{X})\sigma_{i|k}^2\Big]\varg_i^2(\bm{X})\\
  &+2\sum_{i=1}^{k-1}w(\bm{X})\sigma_k^2\alpha_kc_{i|k}\varg_i(\bm{X})\varg_k(\bm{X})\\
  &+2\sum_{{1\le{i}<\atop{j}\le{k-1}}}\Big(w(\bm{X})\sigma_k^2\alpha_k^2c_{i|k}c_{j|k}+w_{-k|k}(\bm{X})\sigma_{ij|k}\Big)\varg_i(\bm{X})\varg_j(\bm{X})\Bigg].
 \end{split}
 \]
 After some algebra (see \cite{AfePap}), \eqref{main result} follows.

 Consider the function $\varg(\bm{x})=\rho_0+\sum_{i=1}^{k}\rho_i x_i$. One can easily see that \eqref{main result} holds as equality.
 Conversely, assume that \eqref{main result} holds as equality. Then \eqref{ineq Var E g|X_k}, \eqref{ineq C-S} and \eqref{ineq Var g|X_k} hold as equalities.
 From the equality in \eqref{ineq Var g|X_k}, under the inductional hypothesis, it follows that
 $\varg(\bm{x})=\varrho_0(x_k)+\sum_{i=1}^{k-1}\varrho_i(x_k) x_i$. From the equality in \eqref{ineq C-S} we have that the quantity
 $\varg_k(\bm{x})+\alpha_k\sum_{i=1}^{k-1}c_{i|k}\varg_i(\bm{x})$ is a constant in $x_1,\ldots,x_{k-1}$. Combining the above relations it follows that the quantity
 $\Delta_k\varrho_0(x_k)+\sum_{i=1}^{k-1}[\Delta_k\varrho_i(x_k)]x_i+\alpha_k\sum_{i=1}^{k-1}c_{i|k}\varrho_i(x_k)=\sum_{i=1}^{k-1}[\Delta_k\varrho_i(x_k)]x_i+h(x_k)$
 is a constant in $x_1,\ldots,x_{k-1}$. Therefore, $\Delta_k\varrho_i(x_k)=0$ for all $i=1,\ldots,k-1$, that is $\varrho_i(x_k)=\rho_i$, $i=1,\ldots,k-1$, are constants. Thus, $\varg(\bm{x})=\varrho_0(x_k)+\sum_{i=1}^{k-1}\rho_ix_i$.
 Finally, from the equality in \eqref{ineq Var E g|X_k} it follows that the quantity $\E(\varg(\bm{X})|X_k=x_k)$ is a linear function in $x_k$.
 Moreover, $\E(\varg(\bm{X})|X_k=x_k)=\E\big(\varrho_0(X_k)+\sum_{i=1}^{k-1}\rho_iX_i|X_k=x_k\big)=\varrho_0(x_k)+\sum_{i=1}^{k-1}\rho_i\E(X_i|X_k=x_k)$.
 For both cases the quantity $\sum_{i=1}^{k-1}\rho_i\E(X_i|X_k=x_k)$ is a linear function of $x_k$. Hence, $\varrho_0(x_k)$ is a linear function of $x_k$, i.e.\ $\varrho_0(x_k)=\rho_0+\rho_kx_k$, and the proof is complete.
 \end{proof}

 The present technique is based, mainly, on the fact that the functions $w^i$, $i=1,2,...,k$, are the same for multinomial and negative multinomial distributions, see Remark \ref{rem same w^i}. Of course, this is not true for all integer-valued multivariate distributions. Thus, in other cases, the present technique may not be applicable.

 \subsection{An application in negative trinomial distribution}
 \label{subsection an application}
 Next, we give an example in the trinomial distribution, in which the exact variance is rather difficult to compute, but the upper/lower bounds can be derived.

 Let $\bm{X}=(X_1,X_2)^\ta\sim\nm_2\big(1,\bm{\theta}=(\theta_1,\theta_2)^\ta\big)$, that is
 \[
 p(i,j)\equiv p_{X_1,X_2}(i,j)=\frac{(i+j)!}{i!j!}\theta_0\theta_1^i\theta_2^j,
 \quad i,j=0,1,\ldots,
 \]
 and consider the function $h(k)=1+\frac{1}{2}+\dots+\frac{1}{k}$, where $h$ is assumed to be zero if $k=0$. The statistic $T=T(\bm{X})=h(X_1)-h(X_2)$ is the unbiased estimator of $\ln\frac{1-\theta_2}{1-\theta_1}$, since
 \[
 \E T=\E h(X_1)-\E h(X_2)=-\ln\frac{\theta_0}{1-\theta_2}+\ln\frac{\theta_0}{1-\theta_1},
 \]
 see Afendras et al.\ \cite[pp.\ 180--181]{APP1}. The variance of $T$ is, clearly, quite complicated. However, the bounds of \eqref{mult. lower variance bound C-P} and \eqref{main result} can be used. Here $w(\bm{x})=\theta_0(x_1+x_2+1)$, $\Delta^\ta T=(T_1,T_2)=\big(\frac{1}{x_1+1},-\frac{1}{x_2+1}\big)$ and
 $\Sig=\theta_0^{-2}
       \Big(
        {\theta_1(1-\theta_2) \atop \theta_1\theta_2} \ \ {\theta_1\theta_2 \atop \theta_2(1-\theta_1)}
       \Big).$

 For the Cacoullos-Papathanasiou lower bound in \eqref{mult. lower variance bound C-P} we calculate $\E w(\bm{X})T_1=\frac{\theta_0}{1-\theta_2}$ and $\E w(\bm{X})T_2=-\frac{\theta_0}{1-\theta_1}$ (see \cite{AfePap}), and we get
 \begin{equation}
 \label{LB for T}
 \Var T>\frac{(1-\theta_1-\theta_2)(\theta_1+\theta_2)}{(1-\theta_1)(1-\theta_2)}.
 \end{equation}

 Applying \eqref{main result} we have that
 \[
 \Var T<\frac{\theta_1(1-\theta_2)}{\theta_0}E_1-2\frac{\theta_1\theta_2}{\theta_0}E_{12}+\frac{\theta_2(1-\theta_1)}{\theta_0}E_2,
 \]
 where $E_1=\E\frac{X_1+X_2+1}{(X_1+1)^2}$, $E_{1,2}=\E\frac{X_1+X_2+1}{(X_1+1)(X_2+1)}$ and $E_2=\E\frac{X_1+X_2+1}{(X_2+1)^2}$. After some algebra (see \cite{AfePap}) we found that $E_1=\frac{\theta_0}{\theta_1(1-\theta_2)}\ln\frac{1-\theta_2}{\theta_0}$, $E_{1,2}=\frac{\theta_0}{\theta_1\theta_2}\ln\frac{(1-\theta_1)(1-\theta_2)}{\theta_0}$ and $E_2=\frac{\theta_0}{\theta_2(1-\theta_1)}\ln\frac{1-\theta_1}{\theta_0}$; so,
 \begin{equation}
 \label{UB for T}
 \Var T<-\ln[(1-\theta_1)(1-\theta_2)].
 \end{equation}

 Table \ref{table} gives an idea on how the lower/upper bounds of $\Var T$ in \eqref{LB for T} and \eqref{UB for T} behave for various $\theta_1,\theta_2$-values, noting that both bounds are symmetric to $\theta_1$ and $\theta_2$.
 \begin{table}[htp]
 \caption{Numerical values of the upper/lower variance bounds given by \eqref{LB for T} and \eqref{UB for T} for parametric values $\theta_1,\theta_2=0.1,0.2,\ldots,0.8$, with $\theta_1+\theta_2<1$.}
 \label{table}
 \centering{\footnotesize
 \begin{tabular}
 {@{\hspace{0ex}}l@{\hspace{2.5ex}}c@{\hspace{2.5ex}}c@{\hspace{2.5ex}}c@{\hspace{2.5ex}}c@{\hspace{2.5ex}}c@{\hspace{2.5ex}}c@{\hspace{2.5ex}}c@{\hspace{2.5ex}}c@{\hspace{0ex}}}
 \addlinespace
 \toprule
  & \multicolumn{8}{c}{${\textrm{\!\!\!\bf upper bound:\quad}\boldsymbol{-\textrm{\bf ln}[(1-\theta_1)(1-\theta_2)]}\atop\textrm{\bf lower bound:\quad}\frac{\boldsymbol{(1-\theta_1-\theta_2)(\theta_1+\theta_2)}}{\boldsymbol{(1-\theta_1)(1-\theta_2)}}\quad }$}\\
 \cline{2-9}
 \raisebox{-0.4ex}{$\theta_1$} \quad \raisebox{0.1ex}{$\theta_2$} & $0.1$ & $0.2$ & $0.3$ & $0.4$ & $0.5$ & $0.6$ & $0.7$ & $0.8$  \\
 \midrule
 $0.1$ & ${0.211\atop0.198}$ & ${0.329\atop0.292}$ & ${0.462\atop0.381}$ & ${0.616\atop0.463}$ & ${0.799\atop0.533}$ & ${1.022\atop0.583}$ & ${1.309\atop0.593}$ & ${1.715\atop0.500}$\\
 [2ex]
 $0.2$ & ${0.329\atop0.292}$ & ${0.446\atop0.375}$ & ${0.580\atop0.446}$ & ${0.734\atop0.500}$ & ${0.916\atop0.525}$ & ${1.139\atop0.500}$ & ${1.427\atop0.375}$ & \\
 [2ex]
 $0.3$ & ${0.462\atop0.381}$ & ${0.580\atop0.446}$ & ${0.713\atop0.490}$ & ${0.868\atop0.500}$ & ${1.050\atop0.457}$ & ${1.273\atop0.321}$ &  & \\
 [2ex]
 $0.4$ & ${0.616\atop0.463}$ & ${0.734\atop0.500}$ & ${0.868\atop0.500}$ & ${1.022\atop0.444}$ & ${1.204\atop0.300}$ & & & \\
 \bottomrule
 \end{tabular}
 }
 \end{table}
\vspace{-5ex}


\begin{thebibliography}{99}
\small
  \bibitem{Afe}
    {\sc Afendras, G.}\ (2013).
    Unified extension of variance bounds for integrated Pearson family.
    {\em Ann. Inst. Statist. Math.}, {\bf65}, 687--702.

 \bibitem{AP}
    {\sc Afendras, G.}\ and {\sc Papadatos, N.}\ (2011).
    On matrix variance inequalities,
    {\it J.\ Statist.\ Plann.\ Inference}, {\bf 141}, 3628--3631.

 \bibitem{AP2}
    {\sc Afendras, G.}\ and {\sc Papadatos, N.}\ (2013).
    Strengthened Chernoff-type variance bounds.
    {\em Bernoulli (to appear).}
    \
    arXiv: math.PR/1107.1754

 \bibitem{APP1}
    {\sc Afendras, G.}, {\sc Papadatos, N.}\ and {\sc Papathanasiou, V.}\ (2007).
    The discrete Mohr and Noll Inequality with applications to variance bounds.
    {\it Sankhy\={a}}, {\bf 69}, 162--189.

 \bibitem{APP2}
    {\sc Afendras, G., Papadatos, N.}\ and {\sc Papathanasiou, V.}\ (2011).
    An extended Stein-type covariance identity for the Pearson family, with applications to lower variance bounds.
    {\em Bernoulli}, 17(2), 507--529.

 \bibitem{AfePap}
    {\sc Afendras, G.}\ and {\sc Papathanasiou, V.}\ (2011).
    An upper variance bound for the multinomial and the negative multinomial distribution.
    {\em  arXiv:1110.3265}.

 \bibitem{AB}
    {\sc Arnold, B.C.}\ and {\sc Brockett, P.L.}\ (1988).
    Variance bounds using a theorem of Polya.
    {\it Statist.\ Probab.\ Lett.}, {\bf 6}, 321–-326.

 \bibitem{BL}
    {\sc Brascamp, H.J.}\ and {\sc Lieb, E.H.}\ (1976).
    On extensions of the Brunn--Minkowski and Pr\'{e}kopa--Leindler Theorems, including inequalities for log concave functions, and with application to the diffusion equation.
    {\it J.\ Functional Analysis}, {\bf 22}, 366--389.

 \bibitem{Cac}
    {\sc Cacoullos, T.}\ (1982).
    On upper and lower bounds for the variance of a function of a random variable.
    {\it Ann.\ Probab.}, {\bf 10}, 799--809.

 \bibitem{Cac2}
    {\sc Cacoullos, T.}\ (1989).
    Dual Poincar\'e-type inequalities via the Cramer-Rao and the Cauchy-Schwarz inequalities and related characterizations.
    {\it Stat.\ Data Anal. Inf.}, 239--249.

 \bibitem{CP1}
    {\sc Cacoullos, T.}\  and {\sc Papathanasiou, V.}\ (1985).
    On upper bounds for the variance of function of random variables.
    {\it Statist.\ Probab.\ Lett.}, {\bf 3},  175--184.

 \bibitem{CP2}
    {\sc Cacoullos, T.}\  and {\sc Papathanasiou, V.}\ (1989).
    Characterizations of distributions by variance bounds.
    {\it Statist.\ Probab.\ Lett.}, {\bf 7}, 351--356.

  \bibitem{CP3}
    {\sc Cacoullos, T.}\  and {\sc Papathanasiou, V.}\ (1992).
    Lower Variance Bounds and a New Proof of the Central Limit Theorem.
    {\it  J.\ Multivariate Anal.}, {\bf43}, 173--184.

 \bibitem{CR}
    {\sc Chang, W.-Y.}\ and {\sc Richards, D.ST.P.}\ (1999).
    Variance Inequalities for Functions of Multivariate Random Variables.
    {\it Advances in stochastic inequalities (Atlanta, GA, 1997), Contemp.\ Math.}, {\bf 234}, Amer.\ Math.\ Soc., Providence, RI, 43--67.

 \bibitem{Chen}
    {\sc Chen, H.Y.L.}\ (1982).
    An Inequality for the Multivariate Normal Distribution.
    {\it J.\ Multivariate Analysis}, {\bf 12}, 306--315.

 \bibitem{Cher}
    {\sc Chernoff, H.}\  (1981).
    A note on inequality involving the normal distribution.
    {\it Ann.\ Probab.}, {\bf 9}, 533--535.

 \bibitem{Hou}
    {\sc Houdr\'{e}, C.}\ (1995).
    Some applications of covariance identities and inequalities to functions of multivariate normal variables.
    {\it J. Amer. Statist. Assoc.}, {\bf 90}, 965--968.

 \bibitem{HK}
    {\sc Houdr\'{e}, C.}\ and {\sc Kagan, A.}\ (1995).
    Variance inequalities for functions of Gaussian variables.
    {\it J. Theoret. Probab.}, {\bf 8}, 23--30.

 \bibitem{HP}
    {\sc Houdr\'{e}, C.}\ and {\sc P\'erez-Abreu, V.}\ (1995).
    Covariance identities and inequalities for functionals on Wiener and Poisson spaces.
    {\it Ann. Probab.}, {\bf 23}, 400--419.

 \bibitem{John}
    {\sc Johnson, R.W.}\ (1993).
    A note on variance bounds for a function of a Pearson variate.
    {\it Statist.\ Decisions}, {\bf 11}, 273--278.

 \bibitem{Klaa}
    {\sc Klaassen, C.A.J.}\ (1985).
    On an inequality of Chernoff.
    {\it Ann.\ Probab.}, {\bf 3}, 966--974.

 \bibitem{Nash}
    {\sc Nash, J.}\ (1958).
    Continuity of solutions of parabolic and elliptic equations.
    {\it Ame.\ J.\ Math.}, {\bf 80}, 931--954.

  \bibitem{PP}
    {\sc Papadatos, N.}\ and {\sc Papathanasiou, V.}\ (1998).
    Variational Inequalities for Arbitrary Multivariate Distributions.
    {\it  J.\ Multivariate Anal.}, {\bf67}, 154--168.

 \bibitem{Pap}
    {\sc Papathanasiou, V.}\ (1988).
    Variance bounds by a generalization of the Cauchy-Schwarz inequality.
    {\it  Statist.\ Probab.\ Lett.}, {\bf 7}, 29--33.

 \bibitem{P-Rao}
    {\sc Prakasa Rao, B.L.S.}\ (2006).
    Matrix variance inequalities for multivariate distributions.
    {\it  Statistical Methodology}, {\bf 3}, 416--430.

 \bibitem{WZ}
    {\sc Wei, Z.}\ and {\sc Zhang, X.}\ (2009).
    Covariance matrix inequalities for functions of Beta random variables.
    {\it  Statist.\ Probab.\ Lett.}, {\bf 79}, 873--879.
\end{thebibliography}
\end{document}